\numberwithin{equation}{section}
\newtheorem{definition}{Definition}
\newtheorem{assumption}{Assumption}
\newtheorem{proposition}{Proposition}
\newtheorem{theorem}{Theorem}
\newcommand{\mR}{{\mathcal R}}
\newcommand{\bN}{{\mathbb N}}
\newcommand{\bR}{{\mathbb R}}
\newcommand{\bZ}{{\mathbb Z}}
\newcommand{\bP}{{\mathbb P}}
\newcommand{\bE}{{\mathbb E}}
\newcommand{\mC}{{\mathcal C}}
\begin{document}

\title{Insensitive, maximum stable allocations converge to proportional fairness}
\author{N.S. Walton\footnote{ Statistical Laboratory, 
Centre for Mathematical Sciences,
University of Cambridge,
Cambridge CB3 0WB,
United Kingdom 
n.s.walton@statslab.cam.ac.uk}}

\maketitle

\begin{abstract}
We describe a queueing model where service is allocated as a function of queue sizes. We consider allocations policies that are insensitive to service requirements and have a maximal stability region. We take a limit where the queueing model become congested. We study how service is allocated under this limit. We demonstrates that the only possible limit allocation is one that maximizes a proportionally fair optimization problem.

\end{abstract}

\section{Introduction}
Consider a communication network. Documents arrive and are transferred across the different routes of the network. In a communication network, each document transfer receives a rate which may vary through time, depending on the number of transfers present on each route. What sort of behaviour might we want such a network to have?

Perhaps, we would like our communication network to be stable, to be able to cope with the rate that work arrives? If a communication network does not allocate its capacity well then instability can arise. So we want to consider allocation policies that avoid instability whenever it is possible. We call such policies \textit{maximum stable}.

We may, also, like our communication network to be \textit{insensitive} to different document size distributions? Famous examples, such as the Erlang B formula and the processor sharing queue, are know to be insensitive to different job size distributions. Like with the Erlang B formula, an advantage of insensitivity is that stationary statistics can be described without an explicit knowledge all traffic parameters. In addition, like with the processor sharing queue, an advantage of insensitivity is that the expected service time of a document is proportional to its size. Thus, each unit of a document's work can be expected to be treated equally.

A further advantage of a processor sharing discapline is that it allocates capacity proportional to the number of customers in each class. In this way, as the queue gets congested, each customer class is served at a consistent rate. Perhaps, we might want to extend this property to our communication network? In our communication network, as the number of documents on each route gets large, we might want to allocate service relative to number of document transfers on each route. This will result in a \textit{limiting allocation policy} that assigns service according to the fraction of documents on each route.

We have described three desirable properties for our communication network to satisfy: to be maximum stable, to be insensitive and to have a limiting allocation policy. In this paper, we reason that the only limiting allocation policy that can arise from a maximum stable, insensitive network is the policy that solves the optimization problem: 
\begin{equation}\label{pf}
\text{maximize}\quad \sum_{r\in\mR} n_r\log \Lambda_r\quad \text{over} \quad \Lambda\in\mC.
\end{equation}
In this optimization problem, $n_r$ gives the proportion of documents on route $r\in\mR$, $\mC$ gives the set of feasible allocations and $\Lambda=(\Lambda_r:r\in\mR)$ gives a feasible allocation. The allocation policy $\Lambda^{PF}(n)$ that solves the optimization problem \eqref{pf} for each $n=(n_r:r\in\mR)$ is called the \textit{proportionally fair} policy.

We note that by assuming these three initial properties, we have gained an addition property: our network is attempting to \textit{optimize} the service rate allocated to each route. We did not initially prescribe this optimization structure, instead this behaviour is implicitly implied by our networking assumptions.
\subsection{Literature review}
The most important reference for this paper is \cite{Ma07}. Massouli\'e observes that two insensitive allocations: balanced fairness and modified proportional fairness, have a large deviations rate function given by the optimization problem \eqref{pf}. Remark 2 of \cite{Ma07} notes that if balanced fairness has a limiting allocation policy then is must be proportionally fair. Massouli\'e proves modified proportional fairness is maximum stable, insensitive and  that proportional fairness is its limiting allocation policy.

 \cite{Wa09} proves another allocation policy arising from multi-class queueing networks is maximum stable, insensitive and has proportional fairness as its limiting allocation policy. This insensitive allocation policy was first published by \cite{BoPr04}.

Early work on insensitivity our modelling context is due to \cite{Wh85}. A good paper for results, properties and historical review on the insensitivity of allocation policies is \cite{BoPr02}. Procedures for calculating stationary statistics of insensitive networks are discussed in \cite{BPRV03, MaJo10}. Discussions on the expected service times of insensitive allocations are given in \cite{BoPr02} and \cite{KMW09}.

\subsection{Paper Organization}

In Section \ref{sec2}, we present the main allocation policies of interest: the proportionally fair policy and the insensitive policies. In addition, we define what it means for an allocation policy to have a limiting allocation policy. In Section \ref{sec3}, we define our stochastic model of a communication network. We call this model a stochastic flow level model. We, also, define what we mean by maximum stable.
In Section \ref{sec4}, we define insensitivity for a stochastic flow level model, and we state a known insensitivity result. In Section \ref{sec5}, we prove the main result of this paper that the only limit allocation policy of a maximum stable, insensitive policy is the proportionally fair policy, see Theorem \ref{conj 1}. In Section \ref{sec6}, we ask if all maximum stable, insensitive allocations converge to proportional fairness without assuming that a limiting allocation policy exists? We show this is not possible with a counter example: Proposition \ref{prop 1}.

\section{Allocation Policies}\label{sec2}
An \textit{allocation policy} allocates the rate of document transfer across a network. Such policies allocate given the number of documents in transfer on the routes of the network and subject to the capacity constraints of the network. More explicitly, we let the set $\mR$ index the set of routes of the network, and we let the components of the vector $n=(n_r:r\in\mR)\in\bZ_+^\mR$ give the number of documents in transfer on each route. An allocation policy is a function  $\Lambda: \bZ_+^\mR \rightarrow \mC$ where $\mC\subset \bR_+^\mR$ is a closed, bounded, convex set with a non-empty interior. The set $\mC$ is the set of allocations that can be scheduled. We call $\mC$ the \textit{schedulable region}.

An allocation policy of central interest to this paper is defined as follows.

\begin{definition}[Proportionally fair policy]
The allocation policy
$\Lambda^{PF}(n) = (\Lambda^{PF}_r(n): r \in \mR)\in\mC$, defined for $n\in \bZ_+^\mR$,
is called \it
proportionally fair \rm if for all $n\in\bR_+^{\mR}$\vspace{-0.3cm}
\begin{equation*}
 n_r=0\quad \text{implies} \quad \Lambda^{PF}_r(n)=0,
\end{equation*}
and if $\Lambda^{PF}(n)$ solves the optimization problem\footnote{We assume throughout
this paper that $xlog x =0$ for $x=0$, and we adopt the convention  that
$0^0=1$. }
\begin{equation}
\text{maximise}\qquad \sum_{r\in\mR} n_r\log \Lambda_r \qquad \text{over}\qquad \Lambda\in \mC. \label{pf}
\end{equation}
\end{definition}

Proportional fairness was first defined as policy for sharing bandwidth by \cite{Ke97}.

We will shortly introduce a stochastic model of document transfer and allocation. 
For this model, we will be interested in allocation policies that show insensitivity to document sizes. Before we define this model and this insensitivity property, we will define the set of allocation policies which will exhibit insensitivity.

\begin{definition}[Insensitive allocation policies]\label{insens 1}
An allocation policy $\Lambda(\cdot)$ is \textit{insensitive} if there exists a function $\Phi:\bZ^{\mR} \rightarrow \bR_+$ such that $\Phi(0)=1$, $\Phi(n)=0$ $\forall n\notin \bZ_+^{\mR}$ and 
\begin{equation}
 \Lambda_r(n)=\frac{\Phi(n-e_r)}{\Phi(n)},\qquad \forall n\in\bZ_+^{\mR}.\label{insens lambda}
\end{equation}
Here and subsequently, $e_r$ will denote the $r$th unit vector in $\bR_+^{\mR}$.
\end{definition}

We want our allocation policy to have a limit when we allow the routes of the network to become congested. For this reason, we consider limiting allocation policies.

\begin{definition}[Limiting allocation policy]
An allocation policy $\Lambda(\cdot)$ is a \textit{limiting allocation policy} if there exists $\tilde{\Lambda}(\cdot)$ such that for all $n\in\bR_+^\mR$,
\begin{equation}
\Lambda(cn + \text{\Large{o}}(1)) \xrightarrow[c\rightarrow\infty]{} \tilde{\Lambda}(n). \label{limiting}
\end{equation}
Given the conventional `Big-O notation', statement \eqref{limiting} is a shorthand for the convergence statement: for all bounded sequences $\{d_c\}_{c}$ such that $cn+d_c\in\bZ_+^\mR$ 
\begin{equation*}
 \Lambda_r(cn+d_c) \xrightarrow[c\rightarrow\infty]{} \tilde{\Lambda}_r(n), \qquad r\in\mR.
\end{equation*}
\end{definition}

\section{A Stochastic Model}\label{sec3}
We, now, define a stochastic model that allocates service with an allocation policy. This model can be thought of as a model of document transfer across the Internet. We introduce our stochastic model for an allocation policy $\Lambda(\cdot)$.

Documents to be transfered arrive as a Poisson process. Route $r\in\mR$ documents arrive as an independent Poisson process of rate $\nu_r>0$. Each document has a size that is divided into stages. Each stage has a size that is independent exponentially distributed with mean $\delta$. Each route $r\in\mR$ document has a number of stages that is independent and equal in distribution to a random variable $L_r$. We assume $L_r$ has a finite mean and we define~$\mu_r^{-1}:=\delta \bE L_r$,~the mean size of a route $r$ document. When there are $n=(n_r: r\in\mR)$ documents in transfer on each route, each route $r$ documents is served at rate $\frac{\Lambda_r(n)}{n_r}$. Documents are then processed at this rate until the number of documents in transfer changes either by a document departure or arrival. 

To be explicit, given there are $n_{rs}$ route $r$ documents with $s$ stages remaining, the state of this Markov chain is $x=(n_{rs}: r\in\mR, s\in\bN)$. Letting $e_{rs}$ be the $rs$th unit vector, the non-zero transition rates of this Markov chain are
\begin{align*}
 q(x,x+e_{rs}) &= v_r\bP(L_r=s),\\
 q(x,x-e_{r1}) &= \delta  \Lambda_r(n) \frac{n_{r1}}{n_r} \qquad \text{if } n_{r1}>0, \\
 q(x,x-e_{rs}+e_{rs-1}) &= \delta \Lambda_r(n) \frac{n_{rs}}{n_r} \qquad \text{if } n_{rs}>0\text{ and } s\geq 1.
\end{align*}
We call this Markov chain a \textit{stochastic flow level model operating under allocation} $\Lambda(\cdot)$. This model was introduced by \cite{MaRo98}. 

We define the vector of \textit{traffic intensities} $\rho\in \bR_+^{\mR}$ by $\rho_r:=\frac{\nu_r}{\mu_r}$, $r\in\mR$. Also, for the schedulable region $\mC$, we let $\mC^{\circ}$ be the interior of $\mC$.

We consider stochastic flow level models that are stable when possible. For this purpose, we will later use the following assumption.

\begin{assumption}[Maximum Stable]\label{Assump 1}
For $\Lambda(\cdot)$, an allocation policy on schedulable region $\mC$,  we say that the stochastic flow level model operating under $\Lambda(\cdot)$ is maximum stable if it is positive recurrent for all
\begin{equation}
 \rho\in \mC^{\circ}.
\end{equation}
\end{assumption}
A stochastic flow level model is transient for all $\rho$ outside the region $\mC$. Also, there exist allocation policies that are positive recurrent for all $\rho\in\mC^{\circ}$. Notably, \cite{Ma07} proves that the proportionally fair policy is positive recurrent for all $\rho\in\mC^{\circ}$. Thus, in words, Assumption \ref{Assump 1} states that a stochastic flow level model under $\Lambda(\cdot)$ is stable in the largest possible region.





\section{Insensitivity}\label{sec4}
We are interested in stochastic flow level models that show robustness to the distribution of document sizes. For this reason, we consider insensitive stochastic flow level models.

Take a stochastic flow level model operating under allocation $\Lambda(\cdot)$. For such a model, let $\pi=(\pi(n):n\in\bZ_+^{\mR})$ be the stationary distribution of the number of documents in transfer on each route. In general, $\pi$ is a function of the parameters of our model: $(\nu_r:r\in\mR)$, $(L_r: r\in\mR)$ and $\delta$.

\begin{definition}[Insensitive Stochastic Flow Level Model]\label{insens 2}
We define a stochastic flow level model to be \textit{insensitive} if $\pi(n)$ can be expressed as a function parameters $(\nu_r:r\in\mR)$ and $(\mu_r:r\in\mR)$. 
\end{definition}

In words, this definition says that we do not need to know the precise distribution of document sizes to know the stationary behaviour of our model; instead, we only need to know the mean size of the documents. 

Derived from the work of \cite{Wh85}, a key observation made by \cite{BoPr02} was that Definitions \ref{insens 1} and \ref{insens 2} are equivalent in the following sense.

\begin{proposition} \label{insense prop} 
a) A stochastic flow level model is insensitive iff it is operating under an allocation that is insensitive.\\ 
\noindent b) For an insensitive stochastic flow level model,
\begin{equation}
\pi(n)=\frac{\Phi(n)}{B(\rho)}\prod_{r\in\mR} \rho_r^{n_r},\qquad\text{for}\quad n\in\bZ_+^{\mR},\; \rho\in\mC^{\circ}, \label{ed}
\end{equation}
where
\begin{equation}\label{scaling const}
 B(\rho):=\sum_{n\in\bZ_+^{\mR}} \Phi(n)\prod_{r\in\mR} \rho_r^{n_r}.
\end{equation}
\noindent c) When document sizes are exponentially distributed, an insensitive stochastic flow level model is reversible.
\end{proposition}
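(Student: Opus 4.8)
The plan is to analyse the stationary behaviour of the underlying continuous-time Markov chain on the detailed state $x=(n_{rs}:r\in\mR,\,s\in\bN)$ directly, and to extract both directions of (a) together with (b) from a single product-form computation. For the ``if'' direction of (a) and for (b), I would guess the candidate stationary measure
\[
\pi(x)\;\propto\;\Phi(n)\prod_{r\in\mR}\frac{n_r!}{\prod_{s\ge1}n_{rs}!}\prod_{s\ge1}\Big(\rho_r\,\frac{\bP(L_r\ge s)}{\bE L_r}\Big)^{n_{rs}},\qquad n_r=\sum_{s\ge1}n_{rs},
\]
in which the multinomial coefficient records how the $n_r$ route-$r$ documents are distributed among residual stage-counts and the factor $\bP(L_r\ge s)/\bE L_r$ is the stationary residual-stage law. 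The two ingredients driving the verification are the insensitivity relation \eqref{insens lambda}, rewritten as $\Lambda_r(n)\Phi(n)=\Phi(n-e_r)$, and the elementary identity $\sum_{s\ge1}\bP(L_r\ge s)=\bE L_r$.

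First I would substitute this candidate into the full balance equations and check that it in fact satisfies the stronger \emph{partial balance} relations, in which, at each state, the probability flux associated with arrivals of a route-$r$ document balances the flux associated with route-$r$ departures and stage transitions. Each such relation collapses, after cancelling the multinomial and stage factors, to the single identity $\Lambda_r(n)\Phi(n)=\Phi(n-e_r)$; this is exactly where insensitivity of the allocation enters. Summing the candidate over all stage configurations $(n_{rs})_{s\ge1}$ with $\sum_s n_{rs}=n_r$ held fixed, the multinomial theorem together with $\sum_s\bP(L_r\ge s)=\bE L_r$ reduces the inner products to $\rho_r^{n_r}$, yielding $\pi(n)\propto\Phi(n)\prod_r\rho_r^{n_r}$; normalising by \eqref{scaling const} gives \eqref{ed}. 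Since this marginal depends on the model parameters only through $\rho=(\nu_r/\mu_r)$, the model is insensitive, proving the ``if'' direction of (a) and all of (b).

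The converse direction of (a) --- that an insensitive model must run under an insensitive allocation --- is the step I expect to be the main obstacle, since it is a rigidity statement rather than a verification. Here I would exploit the freedom in the stage-number laws $L_r$: insensitivity forces the stationary distribution $\pi(n)$ to be unchanged when $L_r$ is replaced by any other law with the same mean $\mu_r^{-1}=\delta\,\bE L_r$. Comparing the balance equations across two such choices (for instance a deterministic versus a two-point stage law) forces the aggregate balance to split into the per-stage partial balance relations above; defining $\Phi(n):=\pi(n)/\big(\pi(0)\prod_r\rho_r^{n_r}\big)$ and feeding this splitting back through the balance equations then forces $\Lambda_r(n)=\Phi(n-e_r)/\Phi(n)$, which is \eqref{insens lambda}. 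This is the argument attributed to \cite{BoPr02}, building on \cite{Wh85}, and the delicate point is to show that equality of the stationary law for \emph{all} equal-mean stage laws genuinely pins down $\Lambda$ rather than merely constraining it.

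Finally, for (c) I would specialise to exponential document sizes, where each document carries a single exponential stage, the detailed state reduces to the route-count vector $n\in\bZ_+^\mR$, and the only transitions are the nearest-neighbour moves $n\to n\pm e_r$. Reversibility then amounts to the detailed-balance equations $\pi(n)\,q(n,n+e_r)=\pi(n+e_r)\,q(n+e_r,n)$; substituting the product form \eqref{ed}, the arrival rate $\nu_r$ and the route-$r$ departure rate proportional to $\Lambda_r(n+e_r)$, and using \eqref{insens lambda} in the form $\Lambda_r(n+e_r)\Phi(n+e_r)=\Phi(n)$, reduces each equation to an identity among the rate constants equivalent to the definition $\rho_r=\nu_r/\mu_r$. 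As detailed balance then holds on every edge of the lattice, Kolmogorov's criterion is satisfied and the chain is reversible.
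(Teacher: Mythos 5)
Your overall strategy is the right one, and it is essentially the strategy behind the result the paper relies on: note that the paper itself gives no proof of this proposition, deferring entirely to \cite{BoPr02} with the remark that insensitivity is equivalent to reversibility and that \eqref{ed} follows from detailed balance. Your verification of the ``if'' direction of (a) together with (b) is sound: the candidate measure with the multinomial coefficient and residual-stage weights $\bP(L_r\ge s)/\bE L_r$ does satisfy partial balance, and summing out the stage configurations via the multinomial theorem and $\sum_{s\ge 1}\bP(L_r\ge s)=\bE L_r$ gives \eqref{ed}. One small imprecision: not every partial balance relation collapses to $\Lambda_r(n)\Phi(n)=\Phi(n-e_r)$; the arrival/departure balances do, while the stage-transition balances collapse instead to the telescoping identity $\bP(L_r\ge s)=\bP(L_r=s)+\bP(L_r\ge s+1)$ --- both identities hold, so the computation goes through. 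Part (c) is also fine: with exponential sizes the chain lives on $\bZ_+^{\mR}$, and checking detailed balance of \eqref{ed} against the transition rates, using \eqref{insens lambda}, already yields reversibility; the separate appeal to Kolmogorov's criterion is redundant.

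The genuine gap is the ``only if'' direction of (a), and you have flagged it yourself without resolving it. Your plan --- insensitivity means $\pi$ is unchanged across all stage laws with the same mean, so compare balance equations for two such laws, conclude that global balance must split into per-route partial balance, and then read off $\Phi(n):=\pi(n)/\big(\pi(0)\prod_r\rho_r^{n_r}\big)$ satisfying \eqref{insens lambda} --- is the correct strategy, and the final step (partial balance implies $\Lambda_r(n)=\Phi(n-e_r)/\Phi(n)$) is indeed a one-line computation. But the crucial middle step, that invariance of $\pi$ over \emph{all} equal-mean stage laws forces the per-route partial balance relations rather than merely some aggregate constraint, is precisely the rigidity theorem of \cite{Wh85} and \cite{BoPr02}; asserting that comparing a deterministic with a two-point stage law ``forces the aggregate balance to split'' is a claim, not an argument, and it is not evident which family of distributions must be used or how the splitting is extracted (in the literature this is done, for instance, by perturbing within a parametric family of phase-type distributions and exploiting that the balance equations hold identically in the parameter). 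As written, half of part (a) --- and hence the hypothesis under which (b) applies to an insensitive \emph{model} --- rests on an unproven assertion, so the proposal is an incomplete proof; completing it would amount to reproving the theorem the paper cites.
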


The proof of this proposition involves checking that reversibility is equivalent to insensitivity for these queueing networks. The stationary distribution (\ref{ed}) then arises from the detail balance equations. See \cite{BoPr02} for a proof.

\section{The main result}\label{sec5}
We can now prove the main result of this paper.

\begin{theorem}\label{conj 1}
For $\Lambda(\cdot)$ an insensitive, maximum stable, limiting allocation policy
\begin{equation}
 \Lambda(cn + \text{\Large{o}}(1)) \xrightarrow[c\rightarrow\infty]{} \Lambda^{PF}(n),\qquad\quad \text{ for all }n\in\bR_+^\mR.\label{limit thrm}
\end{equation}
Moreover, for $\rho\in\mC^\circ$
\begin{equation}
 \lim_{c\rightarrow\infty} \frac{1}{c} \log \pi(\lfloor cn\rfloor)= -\max\;\; \sum_{r\in\mR} n_r \log \frac{\Lambda_r}{\rho_r} \;\; \mathrm{over}\;\; \Lambda \in\mC. \label{ldp}
\end{equation}
Here and hereafter, we use the convention that $\lfloor x\rfloor := (\lfloor x_r\rfloor : r\in\mR)$ is the lower integer part of each component of $x\in\bR_+^\mR$. 
\end{theorem}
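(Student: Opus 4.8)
The plan is to reduce both displays to a single scaling limit for the balance function $\Phi$. Write $g_c(n):=\tfrac1c\log\Phi(\lfloor cn\rfloor)$ and set $P(n):=\max_{\Lambda\in\mC}\sum_{r\in\mR}n_r\log\Lambda_r$, the optimal value in \eqref{pf}. From the product form \eqref{ed},
\[
\frac1c\log\pi(\lfloor cn\rfloor)=g_c(n)+\frac1c\sum_{r\in\mR}\lfloor cn_r\rfloor\log\rho_r-\frac1c\log B(\rho),
\]
so once I show $g_c(n)\to -P(n)$ the right-hand side converges to $-P(n)+\sum_{r}n_r\log\rho_r=-\max_{\Lambda\in\mC}\sum_{r}n_r\log(\Lambda_r/\rho_r)$, which is exactly \eqref{ldp}; here I use $\rho\in\mC^{\circ}$, so that $B(\rho)<\infty$ and $\tfrac1c\log B(\rho)\to 0$. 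The identification of the maximiser will give \eqref{limit thrm}. Thus the whole theorem rests on proving $g_c(n)\to -P(n)$ and that the limiting allocation attains this maximum.

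For the upper bound I would combine maximum stability with the fact that $\pi$ is a probability. For any $\rho\in\mC^{\circ}$, Assumption \ref{Assump 1} gives positive recurrence, hence $B(\rho)<\infty$ and $\pi(\lfloor cn\rfloor)\le 1$. Rearranging \eqref{ed} gives $g_c(n)\le \tfrac1c\log B(\rho)-\tfrac1c\sum_{r}\lfloor cn_r\rfloor\log\rho_r$, so $\limsup_c g_c(n)\le -\sum_{r}n_r\log\rho_r$. Taking the supremum over $\rho\in\mC^{\circ}$ and using $\mC=\overline{\mC^{\circ}}$ (valid since $\mC$ is convex with non-empty interior) yields $\limsup_c g_c(n)\le -P(n)$.

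For the matching lower bound, and the exact value, I would exploit the telescoping inherent in insensitivity. Iterating \eqref{insens lambda} along a monotone lattice path $0=m_0,m_1,\dots,m_{|\lfloor cn\rfloor|}=\lfloor cn\rfloor$ with $m_k=m_{k-1}+e_{r_k}$ and $\Phi(0)=1$ gives $g_c(n)=-\tfrac1c\sum_k\log\Lambda_{r_k}(m_k)$. Choosing the path to shadow the ray $\{tn:t\ge 0\}$, each intermediate point satisfies $m_k=c\,t_k n+O(1)$ with $t_k=k/|\lfloor cn\rfloor|$ and a uniformly bounded remainder. Because $\tilde\Lambda$ is homogeneous of degree zero (it is constant along rays, directly from the definition \eqref{limiting}), the convergence \eqref{limiting} gives $\Lambda_{r_k}(m_k)\to\tilde\Lambda_{r_k}(n)$, and since direction $r$ is used exactly $\lfloor cn_r\rfloor$ times, $g_c(n)\to -\sum_{r}n_r\log\tilde\Lambda_r(n)=:-V(n)$. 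As $\tilde\Lambda(n)\in\mC$ (a limit of points of the closed set $\mC$), $V(n)\le P(n)$. Combining with the previous paragraph forces $V(n)=P(n)$: the limit $g_c(n)\to -P(n)$ exists, which establishes \eqref{ldp}, and $\tilde\Lambda(n)$ attains the maximum in \eqref{pf}. Finally I would upgrade ``$\tilde\Lambda(n)$ is a maximiser'' to $\tilde\Lambda(n)=\Lambda^{PF}(n)$: on the coordinates with $n_r>0$ the objective $\sum_{r:n_r>0}n_r\log\Lambda_r$ is strictly concave, so its maximiser over the convex set $\mC$ is unique there and equals $\Lambda^{PF}(n)$; on coordinates with $n_r=0$, taking in \eqref{limiting} a sequence $d_c$ whose $r$-th entry is $0$ makes the $r$-th argument vanish, whence \eqref{insens lambda} gives $\Lambda_r(\cdot)=\Phi(\cdot-e_r)/\Phi(\cdot)=0$ and hence $\tilde\Lambda_r(n)=0=\Lambda^{PF}_r(n)$, consistent with the defining convention for $\Lambda^{PF}$. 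This yields \eqref{limit thrm}.

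The main obstacle is the rigour of the telescoping step, namely the existence of $\lim_c g_c(n)$ through $\tfrac1c\sum_k\log\Lambda_{r_k}(m_k)$. Away from the origin the termwise convergence $\Lambda_{r_k}(m_k)\to\tilde\Lambda_{r_k}(n)$ is exactly \eqref{limiting}, but I must also control a boundary layer: for the steps with $t_k$ near $0$ the argument $c\,t_k n$ is still small, the limiting regime has not taken hold, and $\log\Lambda_{r_k}$ may be large in modulus. Showing this layer contributes $o(c)$ uniformly in $c$—so that letting its width tend to $0$ recovers the clean value $V(n)$—is where the real work lies. I expect this to follow from boundedness of $\mC$ (which bounds each $\Lambda_r$ above), the product form \eqref{ed}, and maximum stability preventing the allocation from degenerating in the interior; it is the one place where a careful uniform estimate, rather than a soft argument, is required.
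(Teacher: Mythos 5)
Your architecture is essentially the paper's: an upper bound from the product form \eqref{ed} together with finiteness of the normalizing constant (your ``$\pi$ is a probability'' bound is exactly the paper's Chernoff bound after the substitution $\Lambda_r=\rho_re^{\theta_r}$), a lower bound by telescoping $\Phi$ via \eqref{insens lambda} along a monotone lattice path shadowing the ray $tn$ (which is precisely the paper's reversibility identity \eqref{pie}), and a sandwich forcing $\tilde{\Lambda}(n)$ to be the maximizer of \eqref{pf}. The genuine gap is the step you flag and then postpone: the assertion $g_c(n)\to-V(n)$. Termwise convergence $\Lambda_{r_k}(m_k)\to\tilde{\Lambda}_{r_k}(n)$ is available from \eqref{limiting} only for steps whose position is $sn+d$ with $s\to\infty$ and $|d|\le|\mR|$; for the initial layer of roughly $\epsilon c|n|$ steps nothing you have established bounds $\Lambda_{r_k}(m_k)$ away from $0$, so a priori this layer could contribute $+\Theta(c)$ to $-\sum_k\log\Lambda_{r_k}(m_k)$ and destroy the claimed two-sided limit. ``I expect this to follow'' is not a proof, and since both \eqref{ldp} and \eqref{limit thrm} rest on this step, the argument is incomplete exactly where the content lies.

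There are two clean ways to close it. The paper's way: telescope only from $\lfloor\epsilon cn\rfloor$ to $\lfloor cn\rfloor$, so the unknown boundary behaviour is compressed into the single factor $\pi(\lfloor\epsilon cn\rfloor)$, and then do not estimate that factor at all; instead use the scaling identity $\liminf_c\frac1c\log\pi(\lfloor\epsilon cn\rfloor)=\epsilon\,\liminf_c\frac1c\log\pi(\lfloor cn\rfloor)$ (substitute $c\mapsto\epsilon c$), so that, writing $L$ for $\liminf_c\frac1c\log\pi(\lfloor cn\rfloor)$, the telescoped estimate reads $L\ge-(1-\epsilon)\sum_r n_r\log\bigl(\tilde{\Lambda}_r(n)e^{\delta}/\rho_r\bigr)+\epsilon L$, and the boundary term is eliminated by moving $\epsilon L$ across and dividing by $1-\epsilon$; this is the passage \eqref{lambda til}--\eqref{lower bound 2}, and it requires no control whatsoever near the origin. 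Alternatively, a minimal repair inside your own scheme: you never need the two-sided limit $g_c(n)\to-V(n)$. The one-sided bound $\liminf_c g_c(n)\ge-V(n)$ suffices, since with your $\limsup_c g_c(n)\le-P(n)$ and $V(n)\le P(n)$ it gives $-P(n)\le-V(n)\le\liminf_c g_c(n)\le\limsup_c g_c(n)\le-P(n)$, hence equality throughout. For that one-sided bound the layer is harmless in the direction needed: boundedness of $\mC$ gives $\log\Lambda_{r_k}(m_k)\le\log M$ with $M:=\max_r\sup_{\Lambda\in\mC}\Lambda_r<\infty$, so the layer contributes at least $-C\epsilon+o(1)$ to $g_c(n)$ with $C$ independent of $c$, which vanishes as $\epsilon\to0$; anomalously small values of $\Lambda_{r_k}(m_k)$ only increase $g_c(n)$ and so only help, while on the main part the required uniformity of \eqref{limiting} (the analogue of \eqref{lambda til}) holds because every main-part point is of the form $sn+d$ with $s\ge\epsilon c$ and $|d|\le|\mR|$. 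One point in your favour: your explicit treatment of the coordinates with $n_r=0$ and of uniqueness of the maximizer on $\{r:n_r>0\}$ is more careful than the paper's.
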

\begin{proof}
To prove \eqref{ldp}, we will construct an upper bound and a lower bound. To prove the upper bound, we apply a Chernoff bound. To prove the lower bound, we bound our reversible process along a sequence of transitions from $\lfloor \epsilon cn \rfloor$ to $\lfloor cn \rfloor$. The result \eqref{limit thrm} is a consequence of \eqref{ldp}. 

We start with the upper bound: by a Chernoff bound and Proposition \ref{insense prop}b), for $\theta\in\bR^{\mR}$
\begin{equation*}
\pi(\lfloor cn \rfloor) \leq \bE e^{\sum_r \theta_r(N_r-\lfloor cn_r\rfloor)}=
\begin{cases}
 \frac{B(\rho e^\theta)}{B(\rho)}e^{-\sum_r \theta_r\lfloor cn_r\rfloor} & \text{if } \rho e^{\theta} \in \mC^{\circ},\\
\infty & \text{if } \rho e^{\theta} \notin \mC.
\end{cases}
\end{equation*}
Here we use the shorthand $\rho e^{\theta} := (\rho_re^{\theta_r} : r\in\mR)$. Thus, for $\rho e^{\theta} \in \mC^{\circ}$
\begin{equation}
\limsup_{c\rightarrow \infty} \frac{1}{c} \log \pi(\lfloor cn \rfloor) \notag
\leq \lim_{c\rightarrow\infty} \frac{1}{c} \log \bE e^{\sum_r \theta_r(N_r-\lfloor cn_r\rfloor)}=
 -\sum_{r\in\mR} n_r\theta_r. \label{upper bound}
\end{equation}
Minimizing over $\theta \in\bR^{\mR}$ with $\rho e^{\theta} \in \mC^{\circ}$ gives the required upperbound:
\begin{align*}
\limsup_{c\rightarrow \infty} \frac{1}{c} \log \pi(\lfloor cn \rfloor) &\leq -\max \; \sum_{r\in\mR} n_r\theta_r \;\; \text{over} \;\; \rho e^{\theta} \in\mC \\
&= -\max \;\ \sum_{r\in\mR} n_r\log \frac{\Lambda_r}{\rho_r} \;\; \text{over} \;\; \Lambda \in\mC.
\end{align*}
Above, we make the substitution $\Lambda_r=\rho_r e^{\theta_r}$ for each $r\in\mR$.


Now we prove the lower bound. We assume, for simplicity, that document sizes are exponentially distributed. Thus, by Proposition \ref{insense prop}c) our Markov process is reversible.  We want to consider a sequence of transitions of this processes close to the line $tn$, $t>0$. For $t> 0$, let $m^0,m^1,m^2...$ be the distinct values of $\lfloor tn\rfloor$ as $t$ increases to infinity. This sequence is increasing but a transition from state $m^k$ to $m^{k+1}$ might take more than one document arrival. If so, split the transition from $m^k$ to $m^{k+1}$ into increasing transitions each involving a single arrival. Let $n^0, n^1, n^2...$ be the resulting sequence. Note that the sequence $n^0, n^1, n^2,...$ is an increasing sequence of transitions and close $tn$. This is because for all $k\geq 0$, $n^{k+1}=n^{k}+e_{r_k}$ for some route $r_k$ and because each point $n^k$ is within distance $|\mR|$ of the line $tn$, $t>0$. \footnote{In this paper, we judge distances by the $L^1$ norm $|n|:=\sum_r |n_r|$.}

Define $k(t)$ to be the term in our sequence which equals $\lfloor tn \rfloor$ and also take $\epsilon>0$. By the reversibility of our process
\begin{equation}\label{pie}
 \pi(\lfloor cn\rfloor ) = \left[ \prod_{k=k(c\epsilon)+1}^{k(c)} \frac{\rho_{r_k}}{\Lambda_{r_k}(n^k)}\right] \times \pi(\lfloor cn\epsilon \rfloor).
\end{equation}
In expression \eqref{pie}, by taking $c\epsilon$ suitably large, we can make all the $\Lambda(n^k)$ terms close to $\tilde{\Lambda}(n)$. In particular, by our limit allocation assumption, for all $\delta>0$ there exists $c$ such that for all $k>k(c\epsilon)$
\begin{equation}\label{lambda til}
 \Lambda_{r}(n^k) e^{-\delta} \leq \tilde{\Lambda}_r(n) \leq \Lambda_{r}(n^k) e^{\delta},\qquad \forall r\in\mR\text{ with }n_r>0.\footnote{We note that $\tilde{\Lambda}_r(n)>0$ if $n_r>0$ as otherwise $\sum_k \pi(n^k)$ would diverge.}
\end{equation}
Noting that exactly $\lfloor cn_r\rfloor-\lfloor cn_r\epsilon\rfloor$ route $r$ transitions are used in \eqref{pie}, and applying \eqref{lambda til} to \eqref{pie} gives that
\begin{align*}
 &\liminf_{c\rightarrow\infty} \frac{1}{c} \log \pi(\lfloor cn\rfloor) \\
 &\geq \liminf_{c\rightarrow\infty} - \sum_{r\in\mR}\!\!\!\sum_{\substack{ k:\; r_k=r \\ k(c\epsilon)<k\leq k(c) }}\!\!\! \frac{1}{c} \log \Big(\frac{\Lambda_{r}(n^k)}{\rho_{r}}\Big) +   \liminf_{c\rightarrow\infty} \frac{1}{c} \log \pi(\lfloor cn\epsilon \rfloor)\\
&\geq - \sum_{r\in\mR} (1-\epsilon)n_r \log \Big(\frac{\tilde{\Lambda}_r(n)e^{\delta}}{\rho_r}\Big) + \epsilon\liminf_{c\rightarrow\infty} \frac{1}{c} \log \pi(\lfloor cn \rfloor)
\end{align*}
Letting $\delta\rightarrow 0$, cancelling terms and dividing by $1-\epsilon$ gives
\begin{equation}
 \liminf_{c\rightarrow\infty} \frac{1}{c} \log \pi(\lfloor cn \rfloor) \geq -\sum_{r\in\mR} n_r \log \frac{\tilde{\Lambda}_r(n)}{\rho_r}. \label{lower bound 2}
\end{equation}
Combining our upperbound \eqref{upper bound} with our new lower bound \eqref{lower bound 2} implies
\begin{equation*}
 \sum_{r\in\mR} n_r \log \frac{\tilde{\Lambda}_r(n)}{\rho_r}\geq \max \;\ \sum_r n_r\log \frac{\Lambda_r}{\rho_r} \;\; \text{over} \;\; \Lambda \in\mC.
\end{equation*}
But $\tilde{\Lambda}(n)\in\mC$, as  it is the limit if elements of $\mC$. Thus $\tilde{\Lambda}_r(n)=\Lambda^{PF}_r(n)$. This completes part b) of the proof and, given of our upper bound \eqref{upper bound} and lower bound \eqref{lower bound 2}, we have also proven part a).
\end{proof}
%
%
\section{A counter example}\label{sec6}
Theorem \ref{conj 1} required the assumption that a limiting allocation policy existed. Can this assumption be completely removed? The answer to this is no, as the following proposition demonstrates.
\begin{proposition}\label{prop 1}
 From any insensitive, maximum stable, limiting allocation policy $\Lambda(\cdot)$, we can construct $\hat{\Lambda}(\cdot)$, an insensitive, maximum stable allocation policy that does not have a limit and, thus, does not converge to a proportionally fair policy.
\end{proposition}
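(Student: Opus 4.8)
The plan is to construct $\hat{\Lambda}(\cdot)$ by deliberately perturbing $\Lambda(\cdot)$ along a sparse, divergent sequence of states so that insensitivity and maximum stability are preserved while the limit in \eqref{limiting} is destroyed. The key observation is that insensitivity is entirely governed by the \emph{existence} of a single balance function $\Phi:\bZ^\mR\to\bR_+$ satisfying $\Lambda_r(n)=\Phi(n-e_r)/\Phi(n)$, while maximum stability is governed only by whether $B(\rho)=\sum_n \Phi(n)\prod_r \rho_r^{n_r}<\infty$ for all $\rho\in\mC^\circ$. Both properties are therefore properties of $\Phi$, so I would work with $\Phi$ rather than with $\Lambda$ directly: pick a new function $\hat\Phi$, check it defines a valid insensitive allocation (i.e. that the resulting $\hat\Lambda$ lands in $\mC$), check $\hat B(\rho)<\infty$ on $\mC^\circ$, and check that $\hat\Lambda(cn+\text{\Large{o}}(1))$ fails to converge for some $n$.

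The construction I would use is a multiplicative perturbation supported on a thin set. Fix a direction $n^\ast\in\bR_+^\mR$ with all components positive, and choose an increasing sequence of integer points $m^1,m^2,\dots$ along the ray $cn^\ast$ with $|m^j|\to\infty$ and gaps between consecutive $m^j$ growing (say $|m^{j+1}|-|m^j|\to\infty$). Define $\hat\Phi(n)=\Phi(n)\,\gamma^{a(n)}$ where $\gamma$ is a fixed constant bounded away from $1$ and $a(n)$ counts (or indicates) membership of $n$ in the perturbed set, chosen so that along the ray the ratios $\hat\Phi(n-e_r)/\hat\Phi(n)$ oscillate between $\gamma\Lambda_r$ and $\gamma^{-1}\Lambda_r$ infinitely often. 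Concretely, I would alternate the perturbation on and off between successive blocks $[m^{2j},m^{2j+1})$ and $[m^{2j+1},m^{2j+2})$, so that $\hat\Lambda_r(m^j)$ does not settle to any single value as $j\to\infty$, defeating \eqref{limiting}.

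Three things then need verifying. First, \textbf{schedulability:} since $\gamma$ can be taken arbitrarily close to $1$ and $\Lambda(n)$ lies in the interior-friendly set $\mC$ away from the boundary for the relevant congested states, the perturbed allocation $\hat\Lambda(n)=\gamma^{\pm1}\Lambda(n)$ still lies in $\mC$ (here I would use that $\mC$ is convex with nonempty interior and that the original congested allocations sit strictly inside, or alternatively rescale the perturbation locally). Second, \textbf{maximum stability:} because $a(n)$ is bounded (the perturbation only multiplies $\Phi$ by a factor in $[\gamma^{-C},\gamma^{C}]$ for a fixed $C$, since any state is crossed by at most boundedly many block boundaries), we have $\hat\Phi(n)\asymp\Phi(n)$ uniformly, hence $\hat B(\rho)<\infty$ exactly when $B(\rho)<\infty$, giving positive recurrence on all of $\mC^\circ$ by Proposition \ref{insense prop}. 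Third, \textbf{failure of the limit:} by construction $\hat\Lambda_r(m^{2j})$ and $\hat\Lambda_r(m^{2j+1})$ differ by a factor $\gamma^2\neq1$ for all large $j$, so no $\tilde\Lambda(n^\ast)$ can satisfy \eqref{limiting}.

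The main obstacle I anticipate is the \textbf{schedulability constraint}: a naive multiplicative perturbation $\gamma^{\pm1}\Lambda(n)$ may push the allocation outside $\mC$ near the boundary of the schedulable region. The clean fix is to localize and taper the perturbation — make $\gamma=\gamma(n)$ depend on $n$ so that it shrinks to $1$ as $\Lambda(n)$ approaches $\partial\mC$, while remaining bounded away from $1$ along the chosen ray $cn^\ast$ (where the limiting allocation $\tilde\Lambda(n^\ast)$ is interior by footnote positivity). Ensuring that the tapered $\hat\Phi$ still factors consistently as a ratio — i.e. that $\hat\Lambda_r(n)=\hat\Phi(n-e_r)/\hat\Phi(n)$ genuinely lies in $\mC$ for \emph{every} $n$ and every $r$, not just along the ray — is the delicate bookkeeping step, and is where I would spend most of the effort.
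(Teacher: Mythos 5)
Your high-level strategy (perturb the balance function $\Phi$ multiplicatively, so that insensitivity is automatic, then verify schedulability, stability, and failure of the limit) is exactly the paper's strategy, but your specific construction has a genuine gap at the schedulability step, and the fix you propose cannot repair it. Because your exponent $a(\cdot)$ switches on and off, it is not monotone along increasing lattice paths, so $\hat\Lambda_r(n)=\Lambda_r(n)\,\gamma^{a(n-e_r)-a(n)}$ necessarily takes an \emph{up-scaled} value $\max(\gamma,\gamma^{-1})\,\Lambda_r(n)$ at every state where a block is entered or exited, and these states lie within bounded distance of the ray $cn^\ast$ (they must, or else \eqref{limiting} is not threatened). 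But Theorem \ref{conj 1} applies to the original policy: $\Lambda(cn^\ast+o(1))\rightarrow\Lambda^{PF}(n^\ast)$, and $\Lambda^{PF}(n^\ast)$ lies on the \emph{boundary} of $\mC$, not in its interior, since it maximizes the objective $\sum_r n^\ast_r\log\Lambda_r$, which is strictly increasing in every coordinate when $n^\ast>0$; consequently $\beta\Lambda^{PF}(n^\ast)\notin\mC$ for every $\beta>1$. As $\mC$ is closed, it follows that for all sufficiently large perturbed states the up-scaled allocation leaves $\mC$, so your $\hat\Lambda$ is not an allocation policy at all. Your tapering fix rests on a false premise: the paper's footnote only guarantees $\tilde\Lambda_r(n^\ast)>0$ componentwise, which does not place $\tilde\Lambda(n^\ast)$ in $\mC^\circ$ (a point can have all positive coordinates and still be Pareto-efficient, and here it is). Since $\Lambda(cn^\ast)$ converges to this boundary point, any taper that keeps $\hat\Lambda(n)\in\mC$ must force $\gamma(n)\rightarrow1$ along the ray, which restores convergence of $\hat\Lambda(cn^\ast)$ to $\Lambda^{PF}(n^\ast)$ and defeats the whole construction.

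The way out, and it is what the paper does, is to make the exponent \emph{monotone non-decreasing} in $|n|$: set $\hat\Phi(n)=\alpha^k\Phi(n)$ for $2^k\leq|n|<2^{k+1}$, with $\alpha>1$. Then $a(n)=\lfloor\log_2|n|\rfloor$ never decreases along increasing paths, so the allocation is only ever scaled \emph{down}: $\hat\Lambda_r(n)=\alpha^{-1}\Lambda_r(n)$ on the spheres $|n|=2^k$ and $\hat\Lambda_r(n)=\Lambda_r(n)$ otherwise, and $\alpha^{-1}\Lambda_r(n)\in\mC$ because $0\in\mC$ and $\mC$ is convex, so schedulability is immediate. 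The oscillation between the subsequences $c_k=2^k$ and $c_k'=2^k-1$ (limits $\Lambda^{PF}(n)/\alpha$ versus $\Lambda^{PF}(n)$) then kills the limiting allocation property. The price of monotonicity is that the exponent is unbounded, so your shortcut ``$\hat\Phi\asymp\Phi$, hence $\hat B(\rho)<\infty$'' is unavailable; instead one uses that the perturbation is sub-exponential, $\alpha^{\log_2|n|}\leq e^{\epsilon|n|}$ for large $|n|$, together with openness of $\mC^\circ$, to absorb it: $\hat B(\rho)\leq B(\rho e^\epsilon)+(\text{a finite sum})<\infty$ whenever $\rho e^\epsilon\in\mC^\circ$. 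Note finally that a bounded monotone exponent cannot work either: along any increasing path its increments are summable, so the perturbation factors tend to $1$ and the limit survives. Unboundedness, monotonicity, and the refined stability estimate therefore come as a package, and your construction has none of the three.
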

\begin{proof}
Let $\Phi(n)$ be the potential function associated with $\Lambda(n)$ in \eqref{insens lambda}. By Theorem \ref{conj 1}, $\Lambda(n)$ limits to a proportionally fair allocation. Define
\begin{equation*}
 \hat{\Phi}(n):= \alpha^k \Phi(n),\qquad \text{for the } k\in\bN \text{ such that } 2^k\leq |n| < 2^{k+1}.
\end{equation*}
We assume $\alpha>1$ and we take $|n|:=\sum_r |n_r|$. For each $r\in\mR$, observe that
\begin{equation*}
 \hat{\Lambda}_r(n)=
\begin{cases}
 \Lambda_r(n), & \text{if }\; 2^k< |n| < 2^{k+1} \text{ for some }k\in\bN,\\
\alpha^{-1}\Lambda_r(n), & \text{if }\; |n| = 2^{k} \text{ for some }k\in\bN.
\end{cases}
\end{equation*}
Since $\alpha>1$ and $0\in\mC$, $\hat{\Lambda}(n)\in\mC$ for all $n\in\bZ_+^\mR$. We can let the sequence $cn$ hit $|cn|=2^k$ for all $k$ suitably large. For example, we can do this by letting $|n|=2^m$ for some $m\in\bN$. In this case, for $c_k=2^k$
\begin{equation*}
 \tilde{\Lambda}(c_kn)= \frac{\Lambda(c_kn)}{\alpha} \xrightarrow[k\rightarrow\infty]{}  \frac{\Lambda^{PF}(n)}{\alpha}\neq \Lambda^{PF}(n).
\end{equation*}
So along this subsequence, the allocation policy $\tilde{\Lambda}(c_kn)$ does not limit to a proportionally fair allocation. Along another subsequence, e.g. $c'_k=2^k-1$, the allocation policy $\tilde{\Lambda}(c'_kn)$ would limit to a proportionally fair allocation. Thus no limit allocation policy exists for $\tilde{\Lambda}(\cdot)$.

It remains to show that $\tilde{\Lambda}(\cdot)$ is a maximum stable allocation. Take $\rho\in\mC^{\circ}$, there exists $\epsilon>0$ such that $\rho e^\epsilon=(\rho_r e^\epsilon: r\in\mR) \in\mC^{\circ}$. Note also there exists $N$ such that $\forall |n|>N$, $\alpha^{\log_2 |n|} < e^{\epsilon |n|}$. Let $\hat{B}(\rho)$ and $B(\rho)$ be the normalizing constants defined by \eqref{scaling const} for $\hat{\Phi}(\cdot)$ and $\Phi(\cdot)$, respectively. We know that $B(\rho e^\epsilon)<\infty$. We now show that $\hat{B}(\rho)<\infty$.
\begin{align*}
 \hat{B}(\rho)&=\sum_{k=0}^\infty \sum_{n: 2^k < |n| \leq 2^{k+1}} \alpha^k \Phi(n) \prod_{r\in\mR} \rho_r^{n_r}\\
&\leq \sum_{n: |n|>N } e^{\epsilon |n|}\Phi(n) \prod_{r\in\mR} \rho_r^{n_r}+ \sum_{n: |n|\leq N } \alpha^{\log_2 |n|}  \Phi(n) \prod_{r\in\mR} \rho_r^{n_r} \\
&=B(\rho e^\epsilon) + \sum_{n: |n|\leq N } \alpha^{\log_2 |n|}  \Phi(n) \prod_{r\in\mR} \rho_r^{n_r}\\
&< \infty.
\end{align*}
Thus, as the normalizing constant $\hat{B}(\rho)$ is finite for all $\rho\in\mC^\circ$, our allocation policy $\hat{\Lambda}(\cdot)$ is maximum stable.

\end{proof}
%
%
%
%
%

\bibliographystyle{spbasic}
\bibliography{../references/references} 

\end{document}